\newtheorem{thm}{Theorem}[section]%%[chapter]
\newtheorem{coro}[thm]{Corollary}
\newtheorem{prop}[thm]{Proposition}
\newtheorem{lemma}[thm]{Lemma}
\theoremstyle{definition}
\newtheorem{exm}[thm]{Example}
\let\emptyset=\varnothing
\begin{document}

\title{Two open problems in the fixed point theory of contractive type mappings on first-countable quasimetric spaces}
\author{Mitrofan M. Choban$^{1}$ and Vasile Berinde$^{2,3}$}

%MSC: 47H10; 54H25
\medskip
%Keywords: distance space, coincidence point, fixed point
\begin{abstract}

Two open problems in the fixed point theory of quasi metric spaces posed in [Berinde, V. and   Choban, M. M.,  {\it Generalized distances and their associate metrics.
Impact on fixed point theory},  Creat. Math. Inform. {\bf 22}
(2013), no. 1, 23--32] are considered.
We give a complete answer to the first problem, a partial answer to the second one, and also illustrate the complexity and relevance of these problems  by means of four very interesting and comprehensive examples.
\end{abstract}

\maketitle

\pagestyle{myheadings} \markboth{Mitrofan M. Choban and Vasile Berinde} {Two open problems}

 \section{Introduction and Preliminaries}

The exist many generalizations of contraction principle in literature, which are established in various settings: cone metric spaces, quasimetric spaces (or $b$-metric spaces), partial metric spaces, $G$-metric spaces, $w$-metric spaces, $\tau$-metric spaces etc. It is really difficult to delineate the true generalizations of the trivial ones. In some recent papers \cite{Hag11}, \cite{Hag13}, \cite{SamVV}, the authors tried to differentiate, amongst  this rich literature, which results are true generalizations and which are trivial. They pointed out some such trivial generalizations in the case of cone metric spaces and partial metric spaces, see \cite{Hag11}, \cite{Hag13}), while in \cite{SamVV}, the authors studied the same problem but for $G$-metric spaces.

This problem arose as a natural reaction to the flood of fixed point research papers published in the last decade. In a recent paper \cite{BC2}, the present authors  inspected whether or not a similar situation to that reported in  \cite{Hag11}, \cite{Hag13} and \cite{SamVV} may happen in the case of $b$-metric spaces (also called quasimetric spaces) and concluded that working in $b$-metric spaces makes sense since, if $ \rho: X\times X \rightarrow \mathbb{R}$ is a quasimetric, then the associate 
functional $\bar \rho: X\times X \rightarrow \mathbb{R}$ generated by
 $\rho$ and given by 
 $$
 \bar \rho (x,y) = \inf \{\rho (x,z_1)+...+\rho (z_i,z_{i+1})+\dots
 $$
\begin{equation} \label{eq-1}
 +\rho (z_n,y):
n \in \mathbb N, z_1,\dots,z_n \in X\},
\end{equation}
 is in general not a metric. The paper  \cite{BC2} naturally closes with the following two open problems.
 
 {\bf Problem 1.} {\it Let $g: X \longrightarrow X$ be a contraction on a complete quasimetric space $(X, d)$.
 Is it true that $g$ has fixed points?}

 {\bf Problem 2.} {\it Let $g: X \longrightarrow X$ be a contraction of a complete $F$-symmetric space $(X, d)$.
 Is it true that $g$ has fixed points?}
 
 As, to our best knowledge, Problems 1 and 2 remained open so far,  it is our aim in this paper to give positive answers to them and also to provide some examples 
 illuminating  to some extent the complexity of the problems.

Throughout the paper, by a space we understand a  topological  $T_0$-space, and we use the terminology from \cite{Eng, GD, RP}.

Let $X$ be a non-empty set and $d : X\times X \rightarrow \mathbb R$ be a mapping such that:

($i_m$) $d(x, y) \geq  0$, for all $x, y \in X$;

($ii_m$) $d(x, y) + d(y,x) = 0$ if and only if $x = y$.

Then $(X, d)$ is called a {\it distance space} and $d$ is called a {\it distance} on $X$.

Let $d$ be a distance on $X$ and $B(x,d,r)$ = $\{y \in X: d(x, y) < r\}$ be the {\it ball} with the center $x$
and radius $r > 0$. The set $U \subset X$ is called {\it $d$-open} if for any $x \in U$ there exists $r > 0$
such that $B(x,d,r) \subset U$. The family $\mathcal T(d)$ of all $d$-open subsets is the topology on $X$ 
generated by $d$. The space $(X, \mathcal T(d))$ is a $T_0$-space.

 A distance space is a {\it sequential space}, i.e., a set $B \subseteq X$ is closed if and only 
if, together with any sequence, $B$ contains all its limits (see \cite{Eng}).

  Let $(X, d)$ be a  distance space, $\{x_n: n \in \mathbb N =\{1, 2,...\}\}$ 
be a sequence in $X$
 and a point $x \in X$. We say that the sequence   $\{x_n: n \in \mathbb N\}$ is:

1) {\it convergent} to $x$ if and only if $\lim_{n\rightarrow \infty }d(x, x_n) = 0$. 
We denote this by $x_n\rightarrow x$ or $x = \lim_{n\rightarrow \infty }x_n$.

2)  {\it Cauchy} or fundamental if $\lim_{n, m\rightarrow \infty }d(x_n, x_m) = 0$.

    We say that  a  distance space $(X, d)$ is {\it complete} if  every
Cauchy sequence in $X$ converges to some point   in $X$.
If  $d $ is a distance on $X$ such that:
\vspace{0.1cm}

($iii_m$) $d(x, y)$ = $d(y, x)$, for all $x, y \in X$,
\vspace{0.1cm}

\noindent then $(X, d)$ is called a {\it symmetric space} and $d$ is called a {\it symmetric} on $X$.
If  $d $ is a distance on $X$ such that:
\vspace{0.1cm}

($iv_m$) $d(x, z) \leq d(x, y) + d(y, z)$, for all $x, y, z \in X$, 
\vspace{0.1cm}

\noindent then $(X, d)$ is called a {\it quasimetric space} and $d$ is called a {\it quasimetric} on $X$.

  A distance $d$ on a set $X$ is called a {\it metric} if  it is  simultaneously a symmetric and a quasimetric.
  
  Let $X$ be a non-empty set and  $d(x, y) $ be a distance on $X$
   with the following property:
\vspace{0.1cm}

(N) for each point $x \in X$ and any $\varepsilon > 0$ there exists $\delta = \delta (x,\varepsilon ) > 0$
such that from $d(x, y) \leq \delta $ and $d(y,z) \leq \delta $ it follows $d(x, z) \leq \varepsilon $.
\vspace{0.1cm}

Then $(X, d)$ is called an {\it N-distance space} and $d$ is called an {\it N-distance}
on $X$.
If $d$ is a symmetric, then we say that $d$ is an $N$-symmetric.

If $d$ satisfies the condition 
\vspace{0.1cm}

(F) for any $\varepsilon > 0$ there exists $\delta = \delta (\varepsilon ) > 0$
such that from $d(x, y) \leq \delta $ and $d(y,z) \leq \delta $ it follows $d(x, z) \leq \varepsilon $,
\vspace{0.1cm}

\noindent then  $d$ is called an {\it F-distance}  or a {\it Fr\'echet distance}  and  $(X, d)$ is 
called an {\it F-distance space}.
 Obviously, any $F$-distance $d$ is an $N$-distance, too, but the reverse is not true, in general, see Examples 1.1 and 1.2 in \cite{C1}.
 
 A distance space $(X, d)$ is called an {\it H-distance space} if for any two distinct points $x, y \in X$
 there exists $\delta  = \delta (x, y) > 0$ such that $d(x,z)$+ $d(y,z) \geq \delta $ for each point $z \in X$,
i.e.  $B(x, d, \delta ) \cap B(y, d, \delta ) = \emptyset $.

 Any $N$-symmetric $d$ is an $H$-distance, too.
A space $(X, d)$ is a $H$-distance space if and
only if any convergent  sequence has a unique limit point (see \cite{SN}, Theorem 3). 

 \section{ Conditions ensuring the existence of fixed points}  \label{sect}

Consider the mapping $\varphi : X \longrightarrow  X$ and let $\varphi ^1$ = $\varphi $ and $\varphi ^{n+1}$ = $\varphi \circ \varphi ^n$ for each $n \in \mathbb N$ = $\{1, 2, ...\}$. Denote by $Fix\,(\varphi )$ the set of fixed points of $\varphi$.
If $x \in X$, then we put $x_0$ = $x$ and $x_n $ = $\varphi ^n(x)$,   for every $n \in \mathbb N$. The set $O(x,\varphi )$ = $\{x_n: n \in \mathbb N\}$  is commonly called the Picard orbit of $\varphi$ at the point $x$.

A mapping $\varphi : X \rightarrow X$ is called:

(i) {\it Lipschitzian} or $\lambda$-{\it Lipschitzian} if there exists  $\lambda  > 0$ such 
that 
\begin{equation} \label{contr}
d(\varphi (x), \varphi (y)) \leq  \lambda \cdot d(x,y), \textnormal{ for all  } x, y \in  X;
\end{equation}

(ii) {\it contraction} or $\lambda$-{\it contraction} if  it is $\lambda$-{\it Lipschitzian} with $0\leq \lambda  < 1$;

(iii) {\it nonexpansive}  if  it is $\lambda$-{\it Lipschitzian} with $\lambda  = 1$.

\begin{prop}\label{P2.1}    Let $(X, d)$ be a H-distance space, $\varphi : X \longrightarrow X$ be 
a $\lambda$-Lipschitzian or a continuous mapping. Suppose that, for
some point $x_0 \in X$,  the Picard sequence $O(x_0, \varphi )$ is convergent. 

Then the mapping $\varphi $ is continuous
and  $Fix\,(\varphi )\neq \emptyset$. 
\end{prop}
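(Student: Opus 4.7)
The plan is to reduce everything to two facts available in the preliminaries: (a) distance spaces are sequential, so continuity is equivalent to sequential continuity; (b) an $H$-distance space has unique sequential limits.

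\textbf{Step 1: Continuity of $\varphi$.} If $\varphi$ is assumed continuous there is nothing to do. If $\varphi$ is $\lambda$-Lipschitzian, then for any sequence $y_n \to y$ in $X$, the definition of convergence gives $d(y,y_n) \to 0$, and the Lipschitz inequality yields
\[
d(\varphi(y),\varphi(y_n)) \leq \lambda \cdot d(y,y_n) \to 0,
\]
so $\varphi(y_n) \to \varphi(y)$. Hence $\varphi$ is sequentially continuous, and since $(X,d)$ is a sequential space this upgrades to continuity.

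\textbf{Step 2: Producing the fixed point.} Put $x_n = \varphi^n(x_0)$ and let $z \in X$ be the limit of the Picard orbit, i.e.\ $d(z,x_n) \to 0$. The tail $\{x_{n+1}\}_{n\in\mathbb N}$ is just a shift of $\{x_n\}$, and from $d(z,x_{n+1}) \to 0$ we see it still converges to $z$. On the other hand, by Step~1 the continuity of $\varphi$ applied to $x_n \to z$ gives $\varphi(x_n) \to \varphi(z)$, i.e.\ $x_{n+1} \to \varphi(z)$.

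\textbf{Step 3: Uniqueness of limits forces $\varphi(z) = z$.} The sequence $\{x_{n+1}\}$ is now convergent to two points, $z$ and $\varphi(z)$. Since $(X,d)$ is an $H$-distance space, the characterization quoted from \cite{SN} (any convergent sequence has a unique limit) forces $\varphi(z) = z$, so $z \in \mathrm{Fix}(\varphi)$.

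There is essentially no obstacle: the argument is the standard Picard-orbit trick, and the only care needed is to verify that convergence in the one-sided sense $d(z,x_n)\to 0$ behaves well under the Lipschitz estimate (which it does, because the inequality is applied with $\varphi(y)$ in the first slot) and that the $H$-distance hypothesis is genuinely used, namely exactly at the uniqueness-of-limit step where we identify $\varphi(z)$ with $z$.
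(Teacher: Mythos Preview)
Your proof is correct and follows essentially the same approach as the paper: show continuity from the Lipschitz condition, then observe that the shifted Picard sequence converges to both $z$ and $\varphi(z)$, and invoke the $H$-distance uniqueness-of-limits to conclude. The only cosmetic difference is in Step~1, where the paper argues continuity directly via the ball inclusion $\varphi\bigl(B(x,d,(1+\lambda)^{-1}r)\bigr)\subseteq B(\varphi(x),d,r)$ rather than going through sequential continuity; both routes are valid here.
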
  

\begin{proof} Assume  that the mapping $\varphi $ is $\lambda$-Lipschitzian. Since $\varphi (B(x, d, (1+\lambda )^{-1}r) \subseteq B(\varphi (x), d, r)$ 
for any point $x \in X$ and any number $r > 0$,  the mapping $\varphi $ is continuous.

 Let  $\{x_n = \varphi ^n(x) \in X: n \in \mathbb N\}$ be the Picard sequence of $\varphi$ at the given
point $x_0 \in X$, which, by hypothesis, converges to a point $a \in X$.
Then, since the mapping $\varphi $ is continuous and $\lim_{n\rightarrow \infty }d(a,x_n)$ = $0$, we have 
 $\lim_{n\rightarrow \infty }d(\varphi (a),\varphi (x_n))$ = $\lim_{n\rightarrow \infty }d(\varphi (a),x_n)$ = $0$ and
 $\lim_{n\rightarrow \infty }x_n$ = $\varphi (a)$. Hence   $\varphi (a)$ = $a$. 
 \end{proof} 
 
 \begin{thm}\label{T2.2}     Let $d$ be  simultaneously an $N$-distance and an $H$-distance on a space $X$ and let $\varphi : X \longrightarrow X$
 be a mapping with the following properties:

\indent (i) $\varphi $ is continuous or $\lambda$-Lipschitzian;

(ii) for some point $e \in X$,  $O(e, \varphi )$ = $\{e_n =\varphi ^n(e): n \in \mathbb N\}$ 
has an accumulation point and $\lim _{n\rightarrow \infty }d(e_n,e_{n+1})  = 0$.
     
     Then:

1. $Fix\,(\varphi)\neq \emptyset$  and any accumulation point of the orbit   $O(e, \varphi )$ is a fixed pout of $\varphi$.

2.  The orbit  $O(e, \varphi )$  has not periodic points.

3. If  $\lim _{n\rightarrow \infty }d(g^n(y),g^{n+1}(y))  = 0$, for each point $y \in X$, then
any periodic point of the mapping $\varphi $ is a fixed point of $\varphi $.
 
4. The space $(X, \mathcal T(d))$ is first-countable and Hausdorff.
\end{thm}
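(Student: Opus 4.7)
The plan is to first reduce hypothesis~(i) to a single case, then handle the four conclusions in order, with Part~4 serving as a topological lemma that Part~2 leans upon. Throughout I will use two routine consequences of $(H)$: sequential limits are unique (if $z_n\to x$ and $z_n\to y$ with $x\ne y$, then $d(x,z_n)+d(y,z_n)\ge\delta(x,y)$ is violated in the limit), and $d(u,v)=0$ forces $u=v$ (apply $(H)$ with $z=v$ and use $d(v,v)=0$, which comes from axiom~($ii_m$)). For the reduction, if $\varphi$ is $\lambda$-Lipschitzian then the inclusion $\varphi(B(x,d,(1+\lambda)^{-1}r))\subseteq B(\varphi(x),d,r)$ used in Proposition~\ref{P2.1} makes $\varphi$ continuous, so I will assume continuity henceforth.

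For Part~1 I will pick an accumulation point $a$ of the orbit and extract a subsequence $e_{n_k}\to a$. Continuity gives $e_{n_k+1}=\varphi(e_{n_k})\to\varphi(a)$. Applying $(N)$ at $a$ with an arbitrary $\varepsilon>0$ and the associated $\delta=\delta(a,\varepsilon)$, hypothesis~(ii) eventually forces both $d(a,e_{n_k})\le\delta$ and $d(e_{n_k},e_{n_k+1})\le\delta$, hence $d(a,e_{n_k+1})\le\varepsilon$; so $e_{n_k+1}\to a$ as well, and the unique-limit property yields $\varphi(a)=a$.

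Parts~2 and~3 rest on a single algebraic observation. If $\varphi^p(y)=y$ for some $p\ge 1$ and $y_n=\varphi^n(y)$, then periodicity gives $d(y_l,y_{l+1})=d(y_{l+jp},y_{l+jp+1})$ for each $l\in\{0,\ldots,p-1\}$ and every $j$. Sending $j\to\infty$ along the sequence where $d(y_n,y_{n+1})\to 0$---which is~(ii) specialized to $y\in O(e,\varphi)$ for Part~2, and the standing assumption in Part~3---forces $d(y_l,y_{l+1})=0$, and the $d=0\Rightarrow$ equality consequence of $(H)$ then gives $y_l=y_{l+1}$, so $y$ is a fixed point. This is Part~3 directly; for Part~2 I will then observe that a periodic orbit point would by this argument be fixed, making $O(e,\varphi)$ eventually constant and hence a finite subset of the Hausdorff space $(X,\mathcal T(d))$ from Part~4, which carries no accumulation point, contradicting~(ii).

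Part~4 is where the real work lies. Hausdorffness will be immediate from $(H)$ once first-countability is in hand: the disjoint $\delta(x,y)$-balls around distinct $x,y$ can be refined to disjoint $d$-open neighborhoods. For first-countability at a fixed $x$, the plan is to iterate $(N)$ at $x$ to build a sequence $\delta_n\downarrow 0$ such that the balls $W_n=B(x,d,\delta_n)$ are nested and satisfy $B(y,d,\delta_{n+1})\subseteq W_n$ for every $y\in W_{n+1}$, and then to extract from these a countable family of honest $d$-open neighborhoods of $x$ forming a local base. The hard part will be precisely this last step: because $(N)$ controls $\delta$ pointwise in its first variable rather than uniformly, producing $d$-open sets inside the $W_n$ (as opposed to mere balls) requires chaining $(N)$ along $x\leadsto y\leadsto z$ with careful attention to strict versus non-strict inequalities.
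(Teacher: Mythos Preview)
Your approach matches the paper's proof closely: both reduce to continuity, use the $N$-property at an accumulation point $a$ to show that $e_{n_k+1}\to a$ alongside the continuity conclusion $e_{n_k+1}\to\varphi(a)$, and invoke unique limits from $(H)$ to obtain $\varphi(a)=a$; both handle Parts~2 and~3 by observing that periodicity makes $d(e_n,e_{n+1})$ constant along an arithmetic progression, which the limit hypothesis forces to zero; and both derive Hausdorffness from $(H)$ once first-countability is available.

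There is one ordering issue worth fixing. Your Part~1 argument begins by extracting a subsequence $e_{n_k}$ with $d(a,e_{n_k})\to 0$ from a topological accumulation point $a$. This extraction already requires that the balls $B(a,d,r)$ are neighborhoods of $a$; otherwise a topological accumulation point of a set need not be approached in the distance sense by any sequence from that set. Hence Part~4 (first-countability) is a prerequisite for Part~1 as well, not only for Part~2 as you state, and the paper accordingly proves Part~4 first. On Part~4 itself, the paper does not carry out the chaining construction you sketch but simply invokes Nedev's result (Theorem~4 in \cite{SN}) for first-countability of $N$-distance spaces; the ``hard part'' you identify is a genuine, nontrivial lemma, and citing it as the paper does is a legitimate alternative to reproving it.
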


\begin{proof} 
From Proposition \ref{P2.1},
it follows that  $\varphi $ is continuous. 
Fix $r > 0$ and $a \in X$. There exists $\delta  > 0$ such that from $d(a, x) \leq \delta $ and $d(x, y) \leq \delta $
it follows that $d(a, y) < r$. Hence $d(x, y) > r$ provided $d(a,x) \leq \delta $ and $y \not\in B(x,d, r)$. From Theorem 4 in \cite{SN}
it follows that $(X, \mathcal T(d))$ is a first-countable space. Hence $a \in cl_XB$ if and only if $d(a, B)$ =
$\inf \{d(a, x):  x \in B\}$ = $0$. A first-countable space with  an $H$-distance is Hausdorff and hence  $d(x, y)$ = $0$ if and only if $x $  = $y$.

Fix $x \in X$. Let   $O(x, \varphi )$ = $\{x_n =\varphi ^n(x): n \in \mathbb N\}$ be the Picard orbit of $\varphi $
at the point $x$. Suppose that   $\lim _{n\rightarrow \infty }d(x_n,x_{n+1})  = 0$. 
Assume that $ x_k  = x_{k+m}$  for some $k, m \in \mathbb N$ and  $m \geq 1$. We have $x_k$ = $x_{k+nm} \not= x_{k+nm+1}$ =  $x_{k+1}$, 
which contradicts the condition  $\lim _{n\rightarrow \infty }d(x_n,x_{n+1})  = 0$. Hence the mapping $\varphi $
has no periodic non-fixed points in the condition that  $\lim _{n\rightarrow \infty }d(g^n(y),g^{n+1}(y))  = 0$ for each point $y \in X$.
In particular, the Picard orbit of $\varphi $ at the point $e$ has no periodic non-fixed points.

If $b \in X$ and $b = e_n = e_{n+1}$  for some $n \in \mathbb N$, then $b$ is a  fixed point of the mapping $\varphi $  and  $O(x,\varphi )$ 
is a Cauchy sequence with the accumulation point  $b$. In this case the assertions of theorem are proved.

 Assume now that $ e_n \not= e_{n+m}$,  for any $n, m \in \mathbb N$.  In this case the set
     $O(e, \varphi )$ is infinite and non-closed in the sequential space  $(X, \mathcal T(d))$.
     Then there exist a point $b \in X$ and a sequence  $\{n_k \in \mathbb N: k \in \mathbb N\}$ such that
$b = \lim_{k \rightarrow \infty } e_{n_k}$, $n_k < n_{k+1}$ and $d(b, e_{n_{k+1}}) < d(b, e_{n_k}) < 2^{-k}$ 
for each $k \in \mathbb N$.     
     
     For each $\varepsilon > 0$ there exists $\delta = \delta (b,\varepsilon ) > 0$
such that from $d(b, y) \leq \delta $ and $d(y,z) \leq \delta $ it follows $d(b, z) \leq \varepsilon $.
We assume that $2\delta  < \epsilon $.
We put $c = \varphi (b)$, $y_k = e_{n_k}$ and $z_k = \varphi (y_k)$. Then $b = \lim_{k\rightarrow \infty} y_k$ 
and, since the  mapping $\varphi  $ is continuous, $c = \lim_{k\rightarrow \infty} z_k$.

We claim that  $b= \lim_{k\rightarrow \infty} z_k$. Fix $\varepsilon  > 0$. There exists $\delta  > 0$
such that:

a)  $d(b, y) < \delta $ and $d(y, z) < \delta $ implies $d(b,z) < \varepsilon $;

b) $d(c, y) < \delta $ and $d(y, z) < \delta $ implies $d(c,z) < \varepsilon $.
\vspace{0.1cm}

Fix $n_1 \in \mathbb N$ such that $2^{-n_1} < \delta $.
Since $\lim _{n\rightarrow \infty }d(x_n,x_{n+1})  = 0$, there exists $m \in \mathbb N$ such that $m \geq  n_1$
and  $d(e_n,e_{n+1}) < \delta $ for each $n \geq m$. Then from $k \geq m$ we have $d(b,y_k) < \delta $, $d(y_k,z_k) < \delta $
and hence  $d(b,z_k) < \varepsilon $. Therefore,    $b= \lim_{k\rightarrow \infty} z_k$.

So, $b = c$ and $\varphi (b)$ = $b$. 
 \end{proof}

 \begin{thm}\label{T2.3}     Let $d$ be  simultaneously an $N$-distance and an $H$-distance on a space $X$ and $\varphi : X \longrightarrow X$ 
 be a contraction with the property that
there exists a point $a \in X$ such that  $O(a, \varphi )$ = $\{a_n =\varphi ^n(x): n \in \mathbb N\}$ 
has an accumulation point.

     Then:

1. The mapping $\varphi $ is continuous and  has a unique  fixed point. 

2.  Any periodic point of the mapping $\varphi $ is a fixed point of $\varphi $.
 
3. Any Picard orbit is  convergent to the fixed point.
\end{thm}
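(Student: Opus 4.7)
The plan is to treat Theorem \ref{T2.3} as a corollary of Theorem \ref{T2.2}, using the strict inequality $\lambda<1$ to upgrade the conclusions. First, since a $\lambda$-contraction is in particular $\lambda$-Lipschitzian, Proposition \ref{P2.1} already gives continuity of $\varphi$. The extra input needed to apply Theorem \ref{T2.2} is the condition $\lim_{n\to\infty} d(a_n,a_{n+1}) = 0$ for $a_n=\varphi^n(a)$; iterating (\ref{contr}) gives $d(a_n,a_{n+1})\le\lambda^n d(a,\varphi(a))$, which tends to $0$ because $\lambda<1$. Combined with the hypothesis that $O(a,\varphi)$ has an accumulation point, Theorem \ref{T2.2}(1) then supplies a fixed point $p\in X$. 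Moreover, Theorem \ref{T2.2}(4) yields that $(X,\mathcal T(d))$ is first-countable Hausdorff, so (as shown in the proof of Theorem \ref{T2.2}) $d(x,y)=0$ if and only if $x=y$.

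For uniqueness of $p$, I would take a second fixed point $q$ and apply (\ref{contr}) in both orders: $d(p,q)=d(\varphi(p),\varphi(q))\le \lambda d(p,q)$ forces $d(p,q)=0$, and symmetrically $d(q,p)=0$, hence $p=q$. For part 2, the same geometric decay $d(\varphi^n(y),\varphi^{n+1}(y))\le\lambda^n d(y,\varphi(y))\to 0$ holds for \emph{every} $y\in X$, so Theorem \ref{T2.2}(3) applies and every periodic point must be fixed. For part 3, given an arbitrary $z\in X$ and $z_n=\varphi^n(z)$, iterating $d(p,z_n)=d(\varphi(p),\varphi(z_{n-1}))\le \lambda d(p,z_{n-1})$ yields $d(p,z_n)\le\lambda^n d(p,z)\to 0$, so $z_n\to p$ in the sense of the paper's definition of convergence.

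The only subtlety is that $d$ need not be symmetric, so the uniqueness step must control both $d(p,q)$ and $d(q,p)$ separately; this is the main (and modest) obstacle, resolved simply by running the contraction inequality twice. Beyond that, no genuinely new machinery is needed: the theorem is essentially a packaging of Theorem \ref{T2.2} under the stronger contraction hypothesis, where $\lambda<1$ simultaneously supplies the decay needed to invoke the earlier result, the uniqueness of the fixed point, and the convergence of \emph{every} Picard orbit to that point.
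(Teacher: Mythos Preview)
Your proof is correct and follows the paper's overall strategy of reducing to Theorem~\ref{T2.2}: both arguments verify $\lim_n d(a_n,a_{n+1})=0$ from the contraction, invoke Theorem~\ref{T2.2} to obtain a fixed point and the first-countable Hausdorff property (hence $d(x,y)=0\Leftrightarrow x=y$), and derive uniqueness from $d(p,q)\le\lambda d(p,q)$ together with the symmetric inequality. The one noteworthy difference is in part~3: the paper first shows that any two Picard orbits share accumulation points (via $d(x_n,y_n)+d(y_n,x_n)\le\lambda^n\mu$), applies Theorem~\ref{T2.2} again to identify those accumulation points with the fixed point, and only then computes $d(b,x_n)+d(x_n,b)\le\lambda^n\eta$; your route is more direct, since once the fixed point $p$ is in hand the single estimate $d(p,z_n)\le\lambda^n d(p,z)$ already matches the paper's definition of convergence. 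Both arguments are elementary, but yours avoids the detour through accumulation points of arbitrary orbits.
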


\begin{proof}  

Fix $r > 0$ and $a \in X$. There exists $\delta  > 0$ such that from $d(a, x) \leq \delta $ and $d(x, y) \leq \delta $
it follows that $d(a, y) < r$. Hence $d(x, y) > r$ provided $d(a,x) \leq \delta $ and $y \not\in B(x,d, r)$. From Theorem 4 in \cite{SN}
it follows that $(X, \mathcal T(d))$ is a first-countable space. Hence $a \in cl_XB$ if and only if $d(a, B)$ =
$\inf \{d(a, x):  x \in B\}$ = $0$. But a first-countable space with  an $H$-distance is Hausdorff. 
This means that $d(x, y)$ = $0$ if and only if $x $  = $y$.

From Theorem \ref{T2.2}
it follows that: a) the mapping $\varphi $ is continuous; b) $\varphi $ has not two distinct fixed points;
c) any periodic point of $\varphi $ is a fixed point.

Fix $x, y \in X$. Let  $O(x, \varphi )$ = $\{x_n =\varphi ^n(x): n \in \mathbb N\}$ and  $O(y, \varphi )$ 
= $\{y_n =\varphi ^n(y): n \in \mathbb N\}$ be the Picard orbits
of $\varphi $ at the points $x$  and $y$.  Fix a number $\mu  > 0$ such that $d(x_1, x_2)$ + $d(x_2, x_1)$ +  $d(y_1, y_2)$ +  $d(y_2, 1_1)$
+  $d(x_1, y_1)$ +  $d(y_1, x_1)  < \mu $. Then $d(x_n, x_{n+1}) < \lambda ^n  \cdot  \mu $ and   $\lim _{n\rightarrow \infty }d(x_n,x_{n+1})  = 0$.
From the inequality $d(x_n, y_n)$ +  $d(y_n, x_n)  < \lambda ^n\cdot \mu $ it follows that the sequences  $O(x, \varphi )$ and   $O(y, \varphi )$ are  
the same accumulation points. Hence, any Picard orbit of $\varphi $ has accumulation points. On the other hand, by  Theorem \ref{T2.2}, any
accumulation point of a Picard orbit of $\varphi $ is a fixed point of $\varphi $. Thus the Picard orbits have a unique accumulation point $b$ = $\varphi (b)$. Let $\eta  > d(b,x_1)$ + $d(x_1, b)$.

Then  $d(b,x_n)$ + $d(x_n, b) < \lambda ^n \cdot \eta $ and hence $\lim_{n\rightarrow \infty } x_n $ = $b$.
 \end{proof}

\begin{coro}\label{C2.4}     Let $d$ be simultaneously a  quasimetric and an $H$-distance on a space $X$ and $\varphi : X \longrightarrow X$
 be a mapping with properties:

\noindent (i) $\varphi $ is continuous or 
$\lambda$-Lipschitzian;

\noindent (ii) for some point $e \in X$, the  Picard orbit  $O(e, \varphi )$ = $\{e_n =\varphi ^n(e): n \in \mathbb N\}$ 
has an accumulation point and $\lim _{n\rightarrow \infty }d(e_n,e_{n+1})  = 0$.%  and $\lim _{n\rightarrow \infty }d(e_n,e_{n+1})  = 0$.
     
     Then:

1. $Fix\,(\varphi )\neq \emptyset$ and any accumulation point of the orbit   $O(e, \varphi )$ is a fixed point of $\varphi $.

2.  The orbit $O(e, \varphi )$ has no periodic points.

3. If  $lim _{n\rightarrow \infty }d(\varphi^n(y),\varphi^{n+1}(y))  = 0$, for each point $y \in X$, then
any periodic point of the mapping $\varphi $ is a fixed point of $\varphi $.
 
4. The space $(X, \mathcal T(d))$ is first-countable and Hausdorff.
\end{coro}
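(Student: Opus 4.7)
The plan is to derive Corollary \ref{C2.4} as a direct consequence of Theorem \ref{T2.2}. The only gap between the two statements is that Theorem \ref{T2.2} requires $d$ to be an $N$-distance, whereas here $d$ is assumed to be a quasimetric. So the first step is to verify the implication \emph{quasimetric} $\Rightarrow$ \emph{$N$-distance}.

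To do this, I would fix $\varepsilon > 0$ and simply set $\delta = \varepsilon/2$ (independently of the base point, so in fact one gets the stronger $F$-distance condition). Then for any $x,y,z \in X$ with $d(x,y) \leq \delta$ and $d(y,z) \leq \delta$, the triangle inequality ($iv_m$) gives
\[
d(x,z) \leq d(x,y) + d(y,z) \leq 2\delta = \varepsilon,
\]
which is precisely condition (N). Thus $d$ is automatically an $N$-distance.

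Once this is established, the hypotheses of Theorem \ref{T2.2} are met verbatim: $d$ is simultaneously an $N$-distance and an $H$-distance, $\varphi$ is continuous or $\lambda$-Lipschitzian, and the orbit $O(e,\varphi)$ has an accumulation point with $\lim_{n\to\infty} d(e_n,e_{n+1}) = 0$. Therefore assertions 1--4 of Theorem \ref{T2.2} transfer directly and yield the four corresponding assertions of the corollary.

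I do not foresee any real obstacle; the whole content of the corollary is the trivial remark that the triangle inequality forces condition (N) with the linear choice $\delta(\varepsilon) = \varepsilon/2$. The proof will consist of one short paragraph verifying this reduction and a one-line invocation of Theorem \ref{T2.2}.
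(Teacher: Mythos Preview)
Your proposal is correct and matches the paper's approach: the corollary is stated without proof as an immediate consequence of Theorem~\ref{T2.2}, and the only observation needed is precisely that the triangle inequality forces condition~(N) (indeed the stronger condition~(F)) with $\delta = \varepsilon/2$.
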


\begin{coro}\label{C2.5}     Let $d$ be  simultaneously a complete quasimetric and an $H$-distance on a space $X$ and $\varphi : X \longrightarrow X$
 be a mapping with the properties:
 
\noindent (i) $\varphi $ is continuous or  
$\lambda$-Lipschitzian;
 
\noindent (ii)  for each point $x \in X$ and the  Picard orbit  $O(x, \varphi )$ = $\{x_n =\varphi ^n(x): n \in \mathbb N\}$ there exists
 a non-negative number  $\mu (x)  < 1$ such that $d(\varphi (x_n), \varphi (x_m)) \leq \mu (x)\cdot d(x_n,x_m)$ for all $n, m \in \mathbb N$.
     
     Then:

1. $Fix\,(\varphi )\neq \emptyset$.

2.  Any periodic point of the mapping $\varphi $ is a fixed point of $\varphi $.
 
3. Any Picard orbit is a Cauchy convergent sequence to some fixed point of $\varphi $.

4. The space $(X, \mathcal T(d))$ is first-countable and Hausdorff.
\end{coro}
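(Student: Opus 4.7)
The plan is to reduce Corollary~\ref{C2.5} to Theorem~\ref{T2.2} by showing that, for every starting point $x\in X$, the Picard orbit $O(x,\varphi)$ is Cauchy (hence convergent by completeness) and satisfies $\lim_{n\to\infty} d(x_n,x_{n+1})=0$. First I would observe that any quasimetric satisfies the Fr\'echet condition (F) with $\delta=\varepsilon/2$, and is therefore in particular an $N$-distance; so the ``$N$-distance $+$ $H$-distance'' setting of Theorem~\ref{T2.2} is already available.

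Fixing $x\in X$ and writing $\mu=\mu(x)<1$, I would apply hypothesis (ii) with the index pairs $(n-1,n)$ and $(n,n-1)$ to obtain
\[
d(x_n,x_{n+1})\leq\mu\, d(x_{n-1},x_n),\qquad d(x_{n+1},x_n)\leq\mu\, d(x_n,x_{n-1}),
\]
so by induction each of $d(x_n,x_{n+1})$ and $d(x_{n+1},x_n)$ is bounded by $\mu^{n-1}$ times the corresponding initial distance, and in particular tends to zero. Chaining these bounds through the quasimetric triangle inequality along the orbit yields the standard geometric-series estimate
\[
d(x_m,x_n)+d(x_n,x_m)\leq\frac{\mu^{m-1}}{1-\mu}\bigl(d(x_1,x_2)+d(x_2,x_1)\bigr)\qquad(m<n),
\]
so $O(x,\varphi)$ is Cauchy (in both orderings). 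Completeness then supplies a limit $b_x\in X$, which is a fortiori an accumulation point of the orbit.

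Now I would invoke Theorem~\ref{T2.2}: hypothesis (i) transfers directly, hypothesis (ii) there is supplied by the previous paragraph, and since the preceding argument works from \emph{every} starting point, the global condition ``$\lim_n d(\varphi^n(y),\varphi^{n+1}(y))=0$ for each $y\in X$'' is also at our disposal. Theorem~\ref{T2.2} then delivers the continuity of $\varphi$, the fact that every accumulation point of every orbit is fixed (so $b_x\in\mathrm{Fix}(\varphi)$, yielding assertions~1 and~3), the conversion of periodic points into fixed points (assertion~2), and the first-countability plus Hausdorffness of $(X,\mathcal T(d))$ (assertion~4).

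The only place where I expect to have to think rather than compute is the \emph{asymmetry} of the quasimetric: because $d(x_n,x_m)$ and $d(x_m,x_n)$ are a priori unrelated, one has to apply the contractive inequality (ii) in both orderings and carry both directions through the Cauchy step. Once that is done, the argument is a routine reduction to Theorem~\ref{T2.2}.
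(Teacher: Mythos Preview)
Your proposal is correct and follows the route the paper evidently has in mind: the corollary is stated without proof, immediately after Corollary~\ref{C2.4} (the quasimetric specialization of Theorem~\ref{T2.2}), and the missing step is precisely the standard geometric-series Cauchy estimate you supply, after which completeness furnishes an accumulation point and Theorem~\ref{T2.2} does the rest. Your attention to running the contractive inequality in both orderings to handle the asymmetry of $d$ is exactly the right care to take here.
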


 \begin{thm}\label{T2.4}     Let $d$ be  simultaneously a complete distance and an $H$-distance on a space $X$ and $\varphi : X \longrightarrow X$
 be a contraction with the property that   there exist two numbers $\delta  > 0$ and $a \geq 1$ such that from $d(x, y) \leq \delta $  and $d(y, z) \leq \delta $ it follows that
 $d(x, z) \leq a[d(x,y) $ + $d(y, z)]$.

     Then:

1. The mapping $\varphi $ is continuous and  has a unique  fixed point. 

2.  Any periodic point of the mapping $\varphi $ is a fixed point of $\varphi $.
 
3. Any Picard orbit is a Cauchy sequence convergent to the fixed point of $\varphi $.
\end{thm}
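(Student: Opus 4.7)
The plan is to reduce Theorem~\ref{T2.4} to Theorem~\ref{T2.3} by verifying that $d$ is an $N$-distance and that every Picard orbit has an accumulation point.

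First, I would observe that the hypothesis forces $d$ to be an $F$-distance, hence an $N$-distance. Given $\varepsilon>0$, set $\delta':=\min\{\delta,\varepsilon/(2a)\}$; if $d(x,y)\leq\delta'$ and $d(y,z)\leq\delta'$, then both quantities lie below $\delta$, so $d(x,z)\leq a[d(x,y)+d(y,z)]\leq 2a\delta'\leq\varepsilon$, and condition~(F) holds. Together with the $H$-distance assumption, Theorem~4 of~\cite{SN} gives that $(X,\mathcal{T}(d))$ is first-countable and Hausdorff, so $d(x,y)=0$ iff $x=y$. As in Proposition~\ref{P2.1}, the $\lambda$-Lipschitz condition combined with the approximate triangle then forces $\varphi$ to be continuous, and uniqueness of fixed points is immediate from the contraction together with Hausdorffness.

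Next, the crux is to show that each Picard orbit $\{x_n=\varphi^n(x_0)\}$ is Cauchy; by completeness this will provide the required accumulation point. Since $\varphi$ is a $\lambda$-contraction, $d(x_n,x_{n+1})\leq\lambda^n d(x_0,x_1)$ and similarly in the reverse direction, so consecutive distances decay geometrically. To bootstrap this into a Cauchy estimate I would iterate the approximate triangle inequality dyadically: setting $A_k^{(n)}=d(x_n,x_{n+2^k})$ and using the contraction estimate $d(x_{n+2^{k-1}},x_{n+2^k})\leq\lambda^{2^{k-1}}A_{k-1}^{(n)}$, the approximate triangle (valid whenever the intermediate distances remain below $\delta$) gives
\[
A_k^{(n)}\leq a\bigl(1+\lambda^{2^{k-1}}\bigr)A_{k-1}^{(n)}.
\]
Unrolling and using the telescoping identity $\prod_{j=0}^{k-1}(1+\lambda^{2^j})=(1-\lambda^{2^k})/(1-\lambda)$ yields
\[
A_k^{(n)}\leq\frac{a^{k}\lambda^{n}d(x_0,x_1)}{1-\lambda}.
\]
For any prescribed $k$, this tends to $0$ as $n\to\infty$, and by choosing $n$ large enough the intermediate values $A_0^{(n)},\ldots,A_{k-1}^{(n)}$ all stay below $\delta$, so the iteration is legitimate. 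Combining this with the contraction reduction $d(x_n,x_m)\leq\lambda^n d(x_0,x_{m-n})$ then produces $\lim_{n,m\to\infty}d(x_n,x_m)=0$. By completeness, $x_n\to a^*$ for some $a^*\in X$, so the orbit $O(x_0,\varphi)$ has an accumulation point, and Theorem~\ref{T2.3} applies to deliver conclusions 1, 2 and~3 of the present theorem at once.

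The principal obstacle will be the dyadic Cauchy estimate: the factor $a^k$, which grows with the chain length $2^k$, must be balanced against the geometric decay $\lambda^n$, and the smallness condition $A_j^{(n)}\leq\delta$ has to be preserved throughout the iteration. Because the approximate triangle is assumed only for distances below $\delta$, the classical global $b$-metric Banach argument cannot be copied verbatim; one has to couple the chain length to the starting index so that every intermediate distance stays in the regime where the hypothesis is applicable, and only then pass to a uniform Cauchy bound via the contraction reduction.
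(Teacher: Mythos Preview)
Your overall plan coincides with the paper's: check that the local hypothesis makes $d$ an $F$-distance (hence an $N$-distance), prove that every Picard orbit is Cauchy, use completeness to obtain a limit (hence an accumulation point), and then invoke Theorem~\ref{T2.3}. The paper itself gives no details for the Cauchy step beyond a reference to Theorem~4.2 of~\cite{C1}, so structurally your outline is at least as explicit as the published argument.

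The gap is in the Cauchy step itself. Your dyadic bound
\[
A_k^{(n)}\;\le\;\frac{a^{k}\lambda^{n}d(x_0,x_1)}{1-\lambda}
\]
is correct, but it only controls distances of the special form $d(x_n,x_{n+2^k})$, and only once $n$ has been pushed far enough (depending on $k$) to keep every intermediate $A_j^{(n)}$ below $\delta$. The sentence ``combining this with $d(x_n,x_m)\le\lambda^{n}d(x_0,x_{m-n})$ then produces $\lim_{n,m\to\infty}d(x_n,x_m)=0$'' does not close the argument: the right-hand side is anchored at index $0$, where none of your dyadic inequalities have been justified, and $m-n$ is in general not a power of $2$. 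Even if one grants a bound of the shape $d(x_0,x_p)\lesssim a^{\log_2 p}=p^{\log_2 a}$, the product $\lambda^{n}\,p^{\log_2 a}$ is not small uniformly over all $p=m-n$, since $m-n$ may be arbitrarily large compared with $n$. Thus the passage from the dyadic estimate to the Cauchy conclusion is incomplete; the ``coupling of chain length to starting index'' that you flag as the principal obstacle is exactly the missing piece, and it has to be carried out explicitly---for instance by first trapping the entire tail of the orbit inside a set of $d$-diameter at most $\delta$ (so that the approximate triangle inequality becomes globally available there), or by passing to an iterate $\varphi^{p}$ with $a\lambda^{p}<1$, running the naive chaining for the $\varphi^{p}$-subsequences, and then interpolating back to the full orbit.
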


\begin{proof} As in the proof of Theorem 4.2 from \cite{C1}, we first prove that any Picard orbit is a Cauchy sequence.
Hence any Picard orbit is a Cauchy sequence convergent to some point. Now, Theorem \ref{T2.3} completes the proof.
 \end{proof}

 \section{ Examples}

 The first two examples in this section show that the requirement that $d$  is an $H$-distance on $X$ in Theorem \ref{T2.2}, Theorem \ref{T2.3}, Theorem \ref{T2.4} and in Corollaries \ref{C2.4} and  \ref{C2.4}  is essential.

 \begin{exm}\label{E3.1}  
Let $X$ = $\{a, b\} \cup \mathbb N$ be a countable set with distinct elements. Consider the distance $d:X\times X\rightarrow \mathbb{R}_{+} $, defined by:

\noindent (i)  $d(x,x)$ = $0$, for any $x \in X$;

\noindent (ii)  $d(m, n)$ = $d(n, m)$ = $|2^{-n} - 2^{-m}|$, for all $n, m \in \mathbb N \subseteq X$;

\noindent (iii)  $d(a, n)$ = $d(b, n)$ = $2^{-n}$, for each $n \in \mathbb N$;

\noindent (iv)   $d(n, a)$ = $d(n, b)$ = $d(a,b)$ = $1$, for each $n \in \mathbb N$.

Then $(X, d)$ is a quasimetric space but $d$ is not an $H$-distance, because for  $x=a, y=b$
 there is no $\delta  = \delta (x, y) > 0$ such that $d(x,n)$+ $d(y,n)=2^{-n+1} \geq \delta $, for all  $n \in \mathbb N$.
 
  Moreover, if we consider the mapping $\varphi : X \longrightarrow X$ defined by:
$\varphi (a)$ = $b \not= \varphi (b)$ = $a$ and $\varphi (n)$ = $n+1$, for each $n \in \mathbb N$, then any Picard orbit $O(n, \varphi )$ is a Cauchy convergent sequence, for each $n \in \mathbb N$, but $\varphi $ is fixed point free.
\end{exm}

\begin{exm}\label{E3.2}  Let $X$ = $\omega $ := $\{0, 1, 2, ...\}$.  On $X$ consider the
distance $d:X\times X\rightarrow \mathbb{R}_{+} $, defined by:
 
\noindent (i)  $d(x, x)$ = $0$, for every $x \in X$;

\noindent (i)  if $n, m \in X$ and $n  \not= m)$, then $d(n, m)$ = $2^{-m}$.

Consider the mapping $g: X \longrightarrow  X$, where $g(n )$ = $n+1$,  for every $n \in X$. 
Obviously, $Fix\,(g)$ = $\{x \in X: g(x ) = x\}$ = $\emptyset $. 

 Let  $O(x,g)$ = $\{x_n: n \in \mathbb N\}$ be the Picard orbit of $g$ 
 at the point $x$, i.e.,  $x_0$ = $x$
 and $x_n $ = $g^n(x)$,   for every $n \in \mathbb N$.
 
{\bf Property 1.} {\it If $n  \in X$, then $O(n, g)$ = $\{m \in X: m \geq  n\}$ is a Cauchy sequence 
and $\lim_{k\rightarrow \infty }g^k(n)$ =$m$ for each $m \in X$.}

By construction,  $\lim_{k\rightarrow \infty }d(m, g^k(n))$ = $\lim_{k \rightarrow \infty }2^{-k-n}$ = $0$.

{\bf Property 2.}  {\it $(X, d)$ is a  quasimetric space}.

If $n, m, k \in X$, then $d(n, m)$ + $d(m, k)$ = $2^{-m}$ + $2^{-k} > 2^{-k}$ = $d(n, k)$.
Hence $d$ is a quasimetric.

{\bf Property 3.}  {\it $(X, d)$ is a complete quasimetric space}.

{\bf Proof.} Let $\{x_n: n \in \omega \}$ be a  sequence.

{\bf Case 1.} {\it There exists $m \in \omega $ such that $x_n$ = $x_m$ for each $n \geq m$.}

In this case  $\lim_{n\rightarrow \infty }x_n$ = $x_m$ and   $\{x_n: n \in \omega \}$ is a
Cauchy convergent  sequence. 

{\bf Case 2.} {\it There exist two distinct numbers $m, k \in \omega $ such that  for each $n \in \omega $ there exist $m(n), k(n) \geq  n$
for which $x_m \not= x_k$, $x_{m(n)}$ = $x_m$ and  $x_{k(n)}$ = $x_k$. }

In this case     $\{x_n: n \in \omega \}$ is not a Cauchy  sequence and  is not a 
convergent  sequence. 

{\bf Case 3.} {\it  There exists a  number $m \in \omega $ such that:

- for each $n \in \omega $ there exists $m(n) \geq  n$
for which $x_{m(n)}$ = $x_m$;

- if $k \in \omega $ and $k \not= m$, then the set   $ \{n \in \omega : x_n $ = $x_k\}$ is finite.}

In this case  $\lim_{n\rightarrow \infty }x_n$ = $x_m$ and   $\{x_n: n \in \omega \}$ is a
Cauchy convergent  sequence.

{\bf Case 4.} {\it For each $m \in \omega $  the set   $ \{n \in \omega : x_n $ = $x_m\}$ is finite}.

In this case  $\lim_{n\rightarrow \infty }x_n$ = $x_m$ for each $m \in \omega $ and   $\{x_n: n \in \omega \}$ is a
Cauchy convergent  sequence.

{\bf Property 4.} {\it  $d(g(x),g(y))$ = $2^{-1}\cdot d(x,y)$, for all $x, y \in X$.}

{\bf Property 5.} {\it  $(X, \mathcal T(d))$ is a compact $T_1$-space and $\mathcal T(d)$ = $\{\emptyset \} \cup \{X \setminus F: F$ {\it is a finite set}$\}$.}
\end{exm}

 \begin{exm}\label{E3.3}  Let $X$ = $\mathbb N \cup \{\mu , \nu \}$ and $\mu , \nu   \not\in \mathbb N$. In $\mathbb N$ consider a sequence
 $\{i_n: n \in \mathbb N\}$ and  a sequence $\{k_n: n \in \mathbb N\}$ such that 

a) $1$ = $i_1$ and $i_n < k_n < i_{n+1}$, for each $n \in \mathbb N$;

b)  $\Sigma \{m^{-1}: m \in \mathbb N, i_n \leq  m  < k_n - 1 \} < 1$,    $\Sigma \{m^{-1}: m \in \mathbb N,  k_n + 1 < m \leq  i_{n+1}\} < 1$,
   $\Sigma \{m^{-1}: m \in \mathbb N,  i_n \leq m  <  k_n \} \geq  1$,    $\Sigma \{m^{-1}: m \in \mathbb N,  k_n <  m \leq i_{n+1}\} \geq  1$
   for each $n \in \mathbb N$. 

Consider on $\mathbb N$ the function $f(n)$ = $\Sigma \{m^{-1}: m \in \mathbb N, m \leq n\}$.
The set $I_n$ = $\{m \in \mathbb N: i_n \leq  m \leq  i_{n+1}\}$ is called an interval of integers. If $m \in I_n$, then:

i) $m$ is in the first part of the interval $I_n$ if $m < k_n$;

ii)  $m$ is in the second part of the interval $I_n$ if $m > k_n$;
 
iii)    $k_n$ is in the middle part of the interval $I_n$. 

Now we construct on $X$ the distance $d$ with the conditions:

{\it (C1)} $d(x, x)$ = $0$, for each $x \in X$;

{\it (C2)} $d(\mu , \nu )$ = $d(\nu , \mu )$ = $d(n, \mu  )$ = $d(n, \nu )$ = $1$  and $d(n, m)$ = $min \{1, |f(n) - f(m)|\}$, for all $n, m \in \mathbb N$;

{\it (C3)}  $d(\mu ,m)$ =  $ min \{1, \Sigma \{i^{-1}: i \in I_n, i_n \leq i \leq m\}\}$ 
and $d(\nu  ,m)$ =  $min \{1, \Sigma \{i^{-1}: i \in I_n, m <i \leq k_n\}$ 
if $m$ is in the first part of $I_n$;

{\it (C4)}   $d(\mu ,m)$ =  $ \{1, \Sigma \{i^{-1}: i \in I_n, m < i \leq i_{n+1}\}\}$  
and $d(\nu  ,m)$ =   $min \{1, \Sigma \{i^{-1}: i \in I_n, k_n\leq i \leq  m\}$ 
if $m$ is in the second part of $I_n$

{\it (C5)}   $d(\mu , k_n)$ = $1$ and $d(\nu , k_n)$ = $k_n^{-1}$. 

By construction, $0 \leq  d(x, y) \leq  1$, for all $x, y \in X$.

We put $\varphi (\mu )$ = $\mu $, $\varphi (\nu )$ = $\nu $ and $\varphi (n)$ = $n+1$   for each $n \in \mathbb N$. 
By construction, $Fix(\varphi )$ = $\{\mu , \nu \}$.

{\bf Property 1.} {\it  $(X, d)$ is a complete distance space}.

The space $(X, d)$ has not non-trivial Cauchy sequences, i.e., if $\{x_n \in X: n \in \mathbb N\}$ is a Cauchy sequence, then
there exists $m \in \mathbb N$ such that $x_m = x_n$, for all $n \geq m$ and $\lim_{n\rightarrow \infty }x_n$ = $x_m$.

{\bf Property 2.} {\it  $(X, d)$ is a  quasimetric space}.

Fix three distinct points $x, y, z   \in X$. We discuss the following cases.

{\bf Case 1.}  $x, y, z   \in \mathbb N$.

On $\mathbb N$ the distance $d$ is a metric. Hence $d(x,z) \leq d(x, y)$ + $d(y, z)$.

{\bf Case 2.}   $\{x, y\}$ =  $\{\mu , \nu \}$ and $z \in \mathbb N$.

In this case $d(x, z) \leq  1$ = $d(x,y) < d(x,y)$ + $d(y,z)$.

{\bf Case 3.}   $\{x, z\}$ =  $\{\mu , \nu \}$ and $y \in \mathbb N$.

In this case $d(x, z) \leq  1$ = $d(y,z) < d(x,y)$ + $d(y,z)$.

{\bf Case 4.}   $\{y, z\}$ =  $\{\mu , \nu \}$ and $x \in \mathbb N$.

In this case $d(x, z)$ = $1$ = $d(x,y) < d(x,y)$ + $d(y,z)$.

{\bf Case 5.}  $z \in \{\mu , \nu \}$, $x, y \in \mathbb N$.

In this case $d(x, z)$ = $d(y,z)$ = $1$ and $d(x,z) < d(x, y)$  + $d(y, z)$. 

{\bf Case 6.}   $y \in \{\mu , \nu \}$ and $x, z \in \mathbb N$.

In this case $d(x, z) \leq 1$, $d(x,y)$ = $1$ and $d(x,z) < d(x, y)$  + $d(y, z)$.

{\bf Case 7.}    $x \in \{\mu , \nu \}$, $n \in \mathbb N$ and $i_n \leq  z < y \leq  k_n$.

If  $x = \mu $, then $d(x,z) \leq  d(x, y)$  and  $d(x,y)$ + $d(y, z) \geq  d(x,z)$.

If $x = \nu  $ and $y < k_n$, then $d(x, z)$ = $\min \{1,  \Sigma \{i^{-1}: i \in I_n, z < i \leq k_n\}\}$ and  $d(x,y)$ + $d(y, z)$ = 
$  \min\{1, \Sigma \{i^{-1}: i \in I_n, y < i \leq k_n\}\}$ +  $  \min\{1, \Sigma \{i^{-1}: i \in I_n, z< i \leq y\}\} \geq  d(x,z)$.

If $x = \nu  $ and $y $ = $ k_n$, then $d(x, z)$ = $\min \{1,  \Sigma \{i^{-1}: i \in I_n, z < i \leq k_n\}\}$ and  $d(x,y)$ + $d(y, z)$ = 
$ k_n^{-1}$ +  $  \min\{1, \Sigma \{i^{-1}: i \in I_n, z< i \leq y\}\}$ = $ k_n^{-1}$ + $d(x,z) > d(x,z)$.

{\bf Case 8.}    $x \in \{\mu , \nu \}$, $n \in \mathbb N$ and $i_n \leq  y < z \leq k_n$.

If $x = \mu $, then $d(x, z)$ = $\min \{1,  \Sigma \{i^{-1}: i \in I_n, i_n \leq i \leq z\}\}$ and  $d(x,y)$ + $d(y, z)$ =
$  \min\{1, \Sigma \{i^{-1}: i \in I_n, i_n \leq i \leq y\}\}$ +  $ \min\{1,  \Sigma \{i^{-1}: i \in I_n, y< i \leq z\}\} \geq   d(x,z)$.

If  $x = \nu $, then $d(x,z) \leq  d(x, y)$  and  $d(x,y)$ + $d(y, z) \geq  d(x,z)$.

{\bf Case 9.}    $x \in \{\mu , \nu \}$, $n \in \mathbb N$ and $ k_n \leq  y < z \leq   i_{n+1}$.

If  $x = \mu  $, then $d(x,z) \leq  d(x, y)$  and  $d(x,y)$ + $d(y, z) \geq  d(x,z)$.

If $x = \nu  $, then $d(x, z)$ = $ \min\{1, \Sigma \{i^{-1}: i \in I_n, k_n \leq i \geq  z\}\}\}$ and  $d(x,y)$ + $d(y, z)$ = 
$  \min\{1, \Sigma \{i^{-1}: i \in I_n, k_n \leq i \geq  y\}\}$ +  $  \min\{1, \Sigma \{i^{-1}: i \in I_n, y< i \leq z\}\}$ =  $d(x,z)$.

{\bf Case 10.}    $x \in \{\mu , \nu \}$, $n \in \mathbb N$ and $ k_n \leq  z < y \leq  i_{n+1}$.

If $x = \mu $ and $ y<  i_{n+1}$, then $d(x, z)$ = $  \min\{1,  \Sigma \{i^{-1}: i \in I_n, z < i \leq k_{n+1}\}\}$ and  $d(x,y)$ + $d(y, z)$ = 
$  \min\{1, \Sigma \{i^{-1}: i \in I_n,  y <  i \leq k_{n+1}\}\}$ +  $  \min\{1, \Sigma \{i^{-1}: i \in I_n, z< i \leq y\}\} \geq d(x,z)$.

If  $x = \mu  $  and $ z$ =  $i_{n+1}$,  then $d(x, z)$ = $  \min\{1,  \Sigma \{i^{-1}: i \in I_n, z < i \leq k_{n+1}\}\}$ and  $d(x,y)$ + $d(y, z)$ = 
$  \min\{1, \Sigma \{i^{-1}: i \in I_n,  y <  i \leq k_{n+1}\}\}$ +  $ \min\{1,  \Sigma \{i^{-1}: i \in I_n, z< i \leq y\}\} \geq d(x,z)$.

If  $x = \nu $, then $d(x,z) \leq  d(x, y)$  and  $d(x,y)$ + $d(y, z) \geq  d(x,z)$.

{\bf Case 11.}    $x \in \{\mu , \nu \}$, $n \in \mathbb N$ and $i_n \leq y <  k_n  <  z \geq i_{n+1}$.

If $x = \mu $, then $d(x, y)$ = $\min \{1,  \Sigma \{i^{-1}: i \in I_n, i_n \leq i \leq y\}\}$,
$d(y, z)$ =   $ \min \{1,  \Sigma \{i^{-1}: i \in I_n, y< i \leq z\}\}$ and  $d(x,y)$ + $d(y, z) \geq 1$. Hence 
 $d(x,y)$ + $d(y, z)  \geq  d(x,z)$.

If  $x = \nu $, then $d(x,z) \leq  d(y, z)$  and  $d(x,y)$ + $d(y, z) \geq  d(x,z)$.

{\bf Case 12.}    $x \in \{\mu , \nu \}$, $n \in \mathbb N$ and $i_n \leq z <  k_n  y \geq i_{n+1}$.

If $x = \mu $, then $d(x, y)$ = $\min \{1,  \Sigma \{i^{-1}: i \in I_n, y \leq i \leq i_{n+1}\}\}$,
$d(y, z)$ =   $ \min \{1,  \Sigma \{i^{-1}: i \in I_n, z< i \leq y \}\}$ and  $d(x,y)$ + $d(y, z) \geq 1$. Hence 
 $d(x,y)$ + $d(y, z)  \geq  d(x,z)$.

If  $x = \nu $, then $d(x,z) \leq  d(y, z)$  and  $d(x,y)$ + $d(y, z) \geq  d(x,z)$.

{\bf Case 13.}    $x \in \{\mu , \nu \}$, $n \in \mathbb N$ and $k_n < y <  i_{n+1}  <  z \geq k_{n+1}$.

If $x = \mu $, then $d(x, z)$ = $\min \{1,  \Sigma \{i^{-1}: i \in I_{n+1},  i_{n+1} \leq  i \leq z\}\}$
$\leq \min \{1,  \Sigma \{i^{-1}: i \in \mathbb N,  y \leq  i \leq z\}\}\}$ = $d(y,z)$. Hence 
 $d(x,y)$ + $d(y, z)  \geq  d(x,z)$.

If  $x = \nu $, then $d(x, y)$ = $\min \{1,  \Sigma \{i^{-1}: i \in I_{n+1},  z <  i \leq k_{n+1}\}\}$,
$d(y, z)$ = $ \min \{1,  \Sigma \{i^{-1}: i \in \mathbb N,  y <  i \leq z\}\}\}$. Hence $d(x,y)$ + $d(y,z) \geq  1$
and $d(x,y)$ + $d(y, z)  \geq  d(x,z)$.

{\bf Case 14.}    $x \in \{\mu , \nu \}$, $n \in \mathbb N$ and $k_n < z <  i_{n+1}  <  y \geq k_{n+1}$.

If $x = \mu $, then $d(x, z)$ = $\min \{1,  \Sigma \{i^{-1}: i \in I_{n},  z < i \leq i_{n+1}\}\}$
$\leq \min \{1,  \Sigma \{i^{-1}: i \in \mathbb N,  z \leq  i \leq y\}\}\}$ = $d(y,z)$. Hence 
 $d(x,y)$ + $d(y, z)  \geq  d(x,z)$.

If  $x = \nu $, then $d(x, y)$ = $\min \{1,  \Sigma \{i^{-1}: i \in I_{n+1},  y <  i \leq k_{n+1}\}\}$,
$d(y, z)$ = $ \min \{1,  \Sigma \{i^{-1}: i \in \mathbb N,  z <  i \leq y\}\}\}$. Hence $d(x,y)$ + $d(y,z) \geq  1$
and $d(x,y)$ + $d(y, z)  \geq  d(x,z)$.

{\bf Case 15.}    $x \in \{\mu , \nu \}$, $n \in \mathbb N$ and $i_n \leq  y  \leq  k_n <  i_{n+1}  <  z \geq k_{n+1}$ or
 $i_n \leq  z  \leq  k_n <  i_{n+1}  <  y \geq k_{n+1}$.

In this case $d(y,z)$ = $1$ and  $d(x,y)$ + $d(y, z)  \geq  d(x,z)$.

There are no other possible cases. The proof of Property 2 is complete.

{\bf Property 3.} {\it The mapping $\varphi $ is continuous, $d(\varphi (x), \varphi (y)) \leq  2\cdot d(x,y)$ for all $x, y \in X$
 and $d(\varphi (x), \varphi (y)) <  d(x,y)$ for all distinct points $x, y \in \mathbb N$.}
 
 If $x  \in \{\mu , \nu \}$ and $n \in \mathbb N$, then  $|d(\varphi (x), \varphi (n)) - d(x, n)|$ = $|d(x, n+1) - d(x,n)| \leq n^{-1}$.

{\bf Property 4.} {\it If $x \in X$, then $\lim_{n\rightarrow \infty }d(\varphi ^n(x), \varphi ^{n+1}(x))$ =  $0$.}

{\bf Property 5.} {\it The space $(X, \mathcal T(d))$ is complete metrizable.}

If $x \in \mathbb N$, then $N_nx$ = $\{x\}$ for each $n \in \mathbb N$. If $x  \in \{\mu , \nu \}$ and $n \in \mathbb N$, then 
$O_nx$ = $\{y \in X: d(x,y) < 2^{-n-2}\}$. Then $\mathcal B$ = $\{O_nx: x \in X, n\in  \mathbb N\}$ is a base of open-and-closed subsets
of the space $(X, \mathcal T(d))$. The proof is complete. 

{\bf Property 6.} {\it There exists a closed discrete sequence $\{x_n \in \mathbb N: n \in \mathbb N\}$ of the space $(X, \mathcal T(d))$ 
such that $x_n < x_{n+1}$ for each $n \in \mathbb N$.}

For each $n \in \mathbb N$ fix $x_n \in I_{n+2}$ such that $\Sigma \{i^{-1}: 1/4 \leq  i_{n+2} \leq i \leq x < 3/4\}$.

{\bf Property 7.} {\it For each $n \in \mathbb N$ the points $\mu $, $\nu $ are points of accumulation of the 
Picard orbit $O(x, \varphi )$.}

{\bf Property 8.} {\it The orbit $O(1, \varphi )$ = $n \in \mathbb N$ is not convergent in $(X, d)$. }

 \begin{exm}\label{E3.4}  Let $\omega $ = $\{0, 1, 2, ...\}$ and $\omega $ be the first infinite ordinal number, $\Omega $ be 
 be the first uncountable ordinal number. For any ordinal number $\alpha $ there exist a unique limit  ordinal number
$l(\alpha )$ and a unique integer $i(\alpha ) \in \omega $ such that $l(\alpha ) \leq  \alpha $ and $\alpha $ = $l(\alpha ) + i(\alpha )$.
If $l(\alpha )$ = $\alpha $, then $\alpha $ is a limit  ordinal.
Let $l^\prime (\alpha )$ = $min \{\beta  \in X: \alpha  < \beta, \beta = l(\beta )\}$. 

Denote by $X$ = $\{\alpha : \alpha  < \Omega \}$ the set of all  countable ordinal numbers.

Consider the mapping $g: X \longrightarrow  X$, where $g(\alpha )$ = $\alpha +1$,  for every $\alpha  \in X$.

By construction, $Fix(g)$ = $\{\alpha  \in X: g(\alpha ) = \alpha \}$ = $\emptyset $ and $l(g(\alpha ))$ = $l(\alpha )$,
 $i(g(\alpha ))$ = $i(\alpha ) + 1$   for every $\alpha  \in X$.
 Let $g^1$ = $g$ and $g^{n+1}$ = $g\circ g^n$ for each $n \in \mathbb N$ = $\{1, 2, ...\}$. If $x \in X$, then $x_0$ = $x$
 and $x_n $ = $g^n(x)$   for every $n \in \mathbb N$. The set $O(x,g)$ = $\{x_n: n \in \mathbb N\}$ is the Picard orbit
 of the point $x$.
 If   $\alpha , \beta  \in X$, $\alpha  < \beta $  and $l(\beta )$ = $l(\alpha )$, then $\beta \in O(\alpha ,g)$.

 On $X$ consider the
distance $d$ with the conditions:

- $d(\alpha , \alpha )$ = $0$ for every $\alpha  \in X$;

- if $\alpha , \beta  \in X$  and $l(\beta )$ = $l(\alpha )$, then $d(\alpha , \beta )$ = $|2^{-i(\alpha )} - 2^{-i(\beta )}|$;

-  if $\alpha , \beta  \in X$  and $l(\beta ) < l(\alpha )$, then $d(\alpha , \beta )$ = $ 2^{-i(\beta )}$
   and  $d(\beta ,\alpha )$ = $1 + 2^{-i(\alpha )}$.
   
{\bf Property 1.} {\it If $\alpha  \in X$, then:

- $d $ is a metric on the orbit $O(\alpha ,g)$ and $d(g(x),g(y))$ = $2^{-1}d(x,y)$ for all $x, y \in O(\alpha , g)$;

- the orbit $O(\alpha , g)$ =  $\{\alpha _n = g^n(\alpha ): n \in \mathbb N\}$ is a  fundamental sequence in $(X, d)$;

- if $\beta  > \alpha $ and $l(\beta ) \geq l^\prime (\alpha ) > \alpha \geq l(\alpha )$, then $\beta $ is a limit point
of the sequence  $\{\alpha _n : n \in \mathbb N\}$;

- if $l(\beta )$ =  $l(\alpha )$ , then $\beta $ is not a limit point
of the sequence  $\{\alpha _n : n \in \mathbb N\}$.}

{\bf Property 2.} {\it Assume that $\{\alpha _n \in X: n \in \mathbb N\}$ is a convergent sequence in $(X, d)$ and
$\alpha $ = $min \{\beta : \beta  = lim_{n \rightarrow \infty }\alpha _n\}$, $\check{\alpha } $ = $\sup\{l(\alpha _n) : n \in \mathbb N\}$,
$\vec{\alpha }$  = $\sup\{l^\prime (\alpha _n) : n \in \mathbb N\}$. 

1. In $X(\omega )$ = $\omega \cup \{\omega \}$ there exists  the limit $b$ = $\lim_{n\rightarrow \infty } i(\alpha _n)$. 

2. If  $\check{\alpha } < \vec{\alpha }$, then  $\{\alpha _n : n \in \mathbb N\} \setminus O(\check{\alpha }, g) $ is a finite
set  $\alpha  \in  O(\check{\alpha }, g) $ and $b < \omega $.

3.  If  $\check{\alpha }$ = $ \vec{\alpha }$, then   $\alpha $ = $  \vec{\alpha }$ and $b$ = $\omega $.}

{\bf Property 3.} {\it $(X, d)$ is a complete quasimetric space}.

{\bf Proof.} Completeness follows from the above properties.

Fix $\alpha , \beta , \gamma    \in X$.

{\bf Case 1.} $l(\alpha )$ = $l(\beta )$ = $l(\gamma )$.

In this case  $\alpha , \beta , \gamma    \in O(l(\alpha ), g)$ and $d(\gamma , \alpha )$ 
= $d(\alpha , \gamma ) \leq d(\alpha , \beta ) $ + $d(\beta , \gamma )$.

{\bf Case 2.} $l(\alpha )= l(\beta ) < l(\gamma )$.

In this case   $d(\alpha , \gamma )$ = $1+ 2^{-i(\gamma )} <  d(\alpha , \beta ) + 1 +  2^{-i(\gamma )} $ = $d(\alpha , \beta ) $ + $d(\beta , \gamma )$.

{\bf Case 3.} $l(\gamma ) < l(\alpha ) = l(\beta )$.

In this case  $d(\alpha , \gamma )$ = $d(\beta , \gamma )$ = $2^{-i(\gamma )}$ and  $d(\alpha , \gamma ) \leq  d(\alpha , \beta ) $ + $d(\beta , \gamma )$.

{\bf Case 4.} $l(\alpha ) = l(\gamma  ) < l(\beta )$.

In this case   $d(\alpha , \gamma ) \leq  1 <  1 +  2^{-i(\beta  )}$ = $d(\alpha , \beta ) \leq  d(\alpha , \beta ) $ + $d(\beta , \gamma )$.

{\bf Case 5.} $l(\beta ) < l(\alpha ) = l(\gamma )$.

In this case   $d(\alpha , \gamma )$ = $|2^{-i(\alpha )} - 2^{-i(\gamma )}|  < 1 < d(\beta  , \gamma ) < d(\alpha , \beta ) $ + $d(\beta , \gamma )$.

{\bf Case 6.} $l(\alpha ) < l(\beta )=  l(\gamma )$.

In this case   $d(\alpha , \gamma )$ = $1  +  2^{-i(\gamma )} \leq   1  +  2^{-i(\beta )}+ |2^{-i(\beta )} - 2^{-i(\gamma )}| $ 
= $d(\alpha , \beta ) $ + $d(\beta , \gamma )$.

{\bf Case 7.}  $l(\beta )$ = $l(\gamma ) < l(\alpha )$.

In this case  $d(\alpha , \beta  ) $ = $2^{-i(\beta )}$, $d(\alpha , \gamma )$ = $2^{-l(\gamma )}$ and  
 $d(\alpha , \beta ) $ + $d(\beta , \gamma )$ = $2^{-l(\beta )}$ + $|2^{-l(\beta )} - 2^{-l(\gamma )}| \geq  2^{-l(\gamma )}$
=  $d(\alpha , \gamma )$.

{\bf Case 8.} $l(\alpha ) < l(\beta ) < l(\gamma )$.

In this case   $d(\alpha , \gamma )$ = $1+ 2^{-i(\gamma )} <  d(\alpha , \beta ) + 1 +  2^{-i(\gamma )} $ = $d(\alpha , \beta ) $ + $d(\beta , \gamma )$.

{\bf Case 9.} $l(\alpha ) <  l(\gamma  ) < l(\beta )$.

In this case   $d(\alpha , \gamma )$ =  $1 +  2^{-i(\gamma )} <  1  +  2^{-i(\beta )} +   2^{-i(\gamma )}$ = $d(\alpha , \beta ) $ + $d(\beta , \gamma )$.

{\bf Case 10.} $l(\beta ) < l(\alpha ) < l(\gamma )$.

In this case   $d(\alpha , \gamma )$ = $1 + 2^{-i(\gamma )} <  d(\alpha , \beta ) + 1 +  2^{-i(\gamma )}$ =  $d(\alpha , \beta ) $ + $d(\beta , \gamma )$.

{\bf Case 11.}  $l(\beta ) < l(\gamma ) < l(\alpha )$.

In this case    $d(\alpha , \gamma ) \leq 1 <  d(\alpha , \beta ) + 1 + 2^{-i(\gamma )}$ = $ d(\alpha , \beta ) $ + $d(\beta , \gamma )$.

{\bf Case 12.} $l(\gamma ) < l(\alpha ) <  l(\beta )$.

In this case  $d(\alpha , \gamma )$ = $d(\beta , \gamma )$ = $2^{-i(\gamma )}$ and  $d(\alpha , \gamma ) \leq  d(\alpha , \beta ) $ + $d(\beta , \gamma )$.

{\bf Case 13.} $l(\gamma ) <  l(\alpha ) < l(\beta )$.

In this case  $d(\alpha , \gamma )$ =   $d(\beta  , \gamma )  <  d(\alpha , \beta ) $ + $d(\beta , \gamma )$.

The proof is complete.

{\bf Property 4.} {\it  $d(g(x),g(y)) < d(x,y)$, for all $x, y \in X$, $x\neq y$.}

{\bf Property 5.} {\it  If $n \in \omega $, then  $X_n$ = $\{\alpha  \in X: i(\alpha ) \leq  n\}$ is  a closed discrete metrizable
subspace of the  space $X$. Moreover, $d(x, y) \geq 2^{-n}$ for all distinct points $x, y \in X_n$ and the set $X \setminus X_n$ is open
and dense in $X$.}
\end{exm}

\end{exm}

 \section{Fixed points and dislocated completeness of distance spaces}

Let $(X, d)$ be a distance space. We denote by $d_s(x,y)$ = $d(x,y) + d(y,x)$,  the symmetric associated to the distance $d$.
The spaces $(X, d)$ and $(X, d_s)$ share the same Cauchy sequences. If $d$ is a quasimetric, then $d_s$ is a metric.

Some authors, instead of the  conditions of uniqueness  of the limit of the Cauchy sequence introduced the concept of a 
stronger limit, i.e., the concept of a dislocated  convergence  of the  sequence (see \cite{AS, D, SA, ZHA}). It is easy to see that dislocated convergence is implicitly
a variant of the symmetry of the distance.

A sequence $\{x_n \in X: n \in \mathbb N\}$ is  said to be dislocated  convergent
to $x  \in X$ if $\lim n_{n\rightarrow \infty }(d(x_n, x) + d(x, x_n))$ = $0$ and we denote this $s$-$\lim_{n\rightarrow \infty }x_n$ = $x$.

The distance space $(X, d)$ is   dislocated  complete if any Cauchy sequence of $X$ is    dislocated  convergent in $(X, d)$.

The distance spaces from Examples \ref{E3.1} and \ref{E3.2} are complete non-dislocated  complete.

The space $(X, d)$ is dislocated  complete if  and only if the space $(X,d_s)$ is complete. A symmetric space  
is dislocated  complete if  and only if it is complete.

 \begin{lemma}\label{L4.1}  \label{lem}   Let $d$ be an $N$-distance  on a space $X$. If  $\{x_n \in X: n \in \mathbb N\}$ is  dislocated  convergent
 sequence, then it is  dislocated  convergent to a unique point.
\end{lemma}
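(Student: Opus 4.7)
The plan is to assume that the sequence $\{x_n\}$ is dislocated convergent to two points $x$ and $y$, and to deduce that $d(x,y) = d(y,x) = 0$, which by axiom $(ii_m)$ forces $x = y$.

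First I would unpack the two hypotheses: dislocated convergence to $x$ gives $d(x_n,x) + d(x,x_n) \to 0$, and dislocated convergence to $y$ gives $d(x_n,y) + d(y,x_n) \to 0$. In particular all four sequences $d(x,x_n)$, $d(x_n,x)$, $d(y,x_n)$, $d(x_n,y)$ tend to $0$. This is the key place where the symmetric-like form of dislocated convergence matters, because to invoke the $N$-condition at a fixed base point we will need one leg of the form $d(\text{basepoint}, x_n)$ and another leg of the form $d(x_n, \text{other point})$ both small.

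Next I would fix $\varepsilon > 0$ and apply property (N) twice: once at the base point $x$ to get $\delta_1 = \delta(x,\varepsilon) > 0$ such that $d(x,u) \le \delta_1$ and $d(u,v) \le \delta_1$ together imply $d(x,v) \le \varepsilon$, and once at the base point $y$ to get $\delta_2 = \delta(y,\varepsilon) > 0$ serving the analogous role. Choosing $N$ large enough so that for all $n \ge N$ the four quantities $d(x,x_n), d(x_n,x), d(y,x_n), d(x_n,y)$ are all smaller than $\min(\delta_1,\delta_2)$, I feed $(u,v) = (x_n, y)$ into the $x$-based $N$-inequality to conclude $d(x,y) \le \varepsilon$, and I feed $(u,v) = (x_n, x)$ into the $y$-based $N$-inequality to conclude $d(y,x) \le \varepsilon$. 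Since $\varepsilon$ was arbitrary, $d(x,y) = d(y,x) = 0$, and axiom $(ii_m)$ yields $x = y$.

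The only subtlety — and the main reason the $N$-distance hypothesis is needed rather than mere uniqueness of limits from the $H$-distance framework — is that the $\delta$ in property (N) depends on the base point. One has to invoke the condition at both $x$ and at $y$ separately and then pass to a common threshold $\min(\delta_1,\delta_2)$; a single application of (N) would only bound one of $d(x,y)$ or $d(y,x)$, and the definition of dislocated convergence conveniently supplies the small quantities of both orientations at once. No obstacle beyond this bookkeeping is expected.
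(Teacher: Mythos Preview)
Your proof is correct and follows essentially the same route as the paper: both arguments invoke property~(N) at each of the two putative limits $x$ and $y$, choose a common threshold, and feed in $(u,v)=(x_n,y)$ (respectively $(u,v)=(x_n,x)$) to force $d(x,y)=d(y,x)=0$. The only cosmetic difference is that the paper argues by contradiction (setting $d(x,y)=4\varepsilon>0$) whereas you give the direct $\varepsilon$-argument.
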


\begin{proof} Assume that  $s-\lim_{n\rightarrow \infty }x_n$ = $x$ and  $s-\lim_{n\rightarrow \infty }x_n$ = $y$. 
Suppose that $d(x, y)$ = $4 \varepsilon  > 0$.  There exists a number $\delta $ such that:

- if $d(x,u) \leq  \delta $  and  $d(u,v) \leq \delta $, then $d(x,v) \leq  \varepsilon $;

- if $d(y,u) \leq  \delta $  and  $d(u,v) \leq \delta $, then $d(y,v) \leq  \varepsilon $.

Since  $\lim_{n\rightarrow \infty }(d(x_n, x) + d(x, x_n))$ = $0$ 
and  $\lim_{n\rightarrow \infty }(d(x_n, y) + d(y, x_n))$ = $0$, there exists $m \in \mathbb N$ such that
$d(x_n, x) + d(x, x_n)) < \delta $ and  $d(x_n, y) + d(y, x_n)) < \delta $, for each $n \geq  m$.
Hence $d(x, x_m) \leq  \delta $, $d(x_m, y) < \delta $ and $d(x,y) > \varepsilon $, a contradiction.
Therefore $d(x,y)$ = $d(y,x)$ = 0 and $x = y$. %The proof is complete.
 \end{proof}

In view of Lemma \ref{lem}, most of the problems on fixed points in  dislocated  complete quasimetric spaces could be reduced to the case
of complete metric spaces. 

For example, if $g: X \longrightarrow  X$ is a contraction on a dislocated  complete quasimetric space $(X, d)$, i.e., there exists  $0\leq  \lambda  < 1$, such that 
$$
d(g(x), g(y)) \leq  \lambda \cdot d(x, y), \textnormal{ for all } x, y \in X,
$$
 then $(X, d_s)$ is a complete metric space and 
  $$
  d_s(g(x), g(y)) \leq  \lambda \cdot d_s(x, y), \textnormal { for all } x, y \in X.
  $$
 Hence, see also our results in Section \ref{sect}, by classical contraction principle, $g$ has a unique fixed point and every Picard orbit is a Cauchy sequence which is dislocated convergent to the fixed point of $g$.

\section*{Acknowledgements}

The  second author acknowledges the support provided by the Deanship of Scientific Research at King Fahd University of Petroleum and Minerals for funding this work through the projects IN151014 and IN141047.

\vskip 0.25 cm {\it $^{a}$ Department of Physics, Mathematics and Information
Technologies\newline
\indent Tiraspol State University\newline 
\indent  Gh. Iablocikin 5., MD2069 Chi\c sin\u au, Republic of Moldova

E-mail: mmchoban@gmail.com }

\vskip 0.5 cm {\it $^{b}$ Department of Mathematics and Computer Science

North University Center at Baia Mare 

Technical University of Cluj-Napoca

Victoriei 76, 430072 Baia Mare ROMANIA

E-mail: vberinde@cunbm.utcluj.ro}

\vskip 0.25 cm {\it $^{c}$ Department of Mathematics and Statistics

King Fahd University of Petroleum and Minerals

Dhahran, Saudi Arabia

E-mail: vasile.berinde@gmail.com}

\begin{thebibliography}{99}

\bibitem{AS}  Aage, C. T.; Salunke, J. N., The results on fixed points in dislocated and dislocated quasi-metric space. Appl. Math. Sci. (Ruse) 2 (2008), no. 57-60, 2941--2948.

\bibitem{AN}    Alexandroff, P. and   Niemytzki, V., {\it Conditions of metrizability of topological spaces ad the axiom of countability},
Matem. Sbornik 3  (1938), no. 3, 663--672.


\bibitem{Ba}     Bakhtin, I. A., {\it The contraction mapping principle in almost metric spaces} (in Russian),
        Funct. Anal., Ulianovskii Gosud. Pedag. Inst. {\bf 30} (1989), 26--37.
        
  \bibitem{Ber09}  Berinde, V., \textit{A common fixed point theorem for compatible quasi contractive self mappings in metric spaces}. Appl. Math. Comput. 213 (2009), no. 2, 348--354.
  
 \bibitem{Ber09a}  Berinde, V., \textit{Approximating common fixed points of noncommuting discontinuous weakly contractive mappings in metric spaces}. Carpathian J. Math. 25 (2009), no. 1, 13--22.
 
\bibitem{Ber10}  Berinde, V., \textit{Common fixed points of noncommuting discontinuous weakly contractive mappings in cone metric spaces}. Taiwanese J. Math. 14 (2010), no. 5, 1763--1776.
  
  \bibitem{Ber11}  Berinde, V., \textit{Generalized coupled fixed point theorems for mixed monotone mappings in partially ordered metric spaces}, Nonlinear Anal. \textbf{74} (2011) 7347--7355.
  
 \bibitem{Ber11a}  Berinde, V., \textit{Stability of Picard iteration for contractive mappings satisfying an implicit relation}. Carpathian J. Math. 27 (2011), no. 1, 13--23.
  
       
\bibitem{BC1}   Berinde, V. and  Choban, M. M.,  {\it Remarks on some completeness conditions involved in
several common fixed point theorems},  Creat. Math. Inform. {\bf 19} (2010), no. 1, 1--10.

\bibitem{BC2}  Berinde, V. and   Choban, M. M.,  {\it Generalized distances and their associate metrics.
Impact on fixed point theory},  Creat. Math. Inform. {\bf 22}
(2013), no. 1, 23--32.

\bibitem{C1}  Choban, M., {\it Fixed points of mappings defined on spaces with distance}, Carpathian J. Math., 32 (2016), no. 2, 173--188. 

\bibitem{ChoB}  Choban, M., Berinde, V., {\it A general concept of multiple fixed point for mappings defined on  spaces with a distance} (submitted)

               \bibitem{D} Dubey, A. K., Reena Shukla and Dubey, R.P., Some fixed point result in
 dislocated quasi-metric spaces,  J. Advance Study in Topology 9 (2014), no. 1, 103--106.
               
\bibitem{Eng}  Engelking, R.,  {\it General topology}. Translated from the Polish by the author. Second edition. Sigma Series in Pure Mathematics, 6. Heldermann Verlag, Berlin, 1989.


\bibitem{GD}  Granas, A.  and Dugundji, J., {\it Fixed point theory}, Springer,  Berlin, 2003.

 \bibitem{Hag11}  Haghi, R.H., Rezapour, Sh. and Shahzad, N., \textit{Some fixed point generalizations are not real generalizations}, Nonlinear Anal. \textbf{74} (2011) 1799--1803

\bibitem{Hag13} Haghi, R.H., Rezapour, Sh. and Shahzad,  N., \textit{Be careful on partial metric fixed point results}, Topology Appl. \textbf{160} (2013) 450--454

\bibitem{SN}   Nedev,  S. I.,    {\it o-metrizable spaces},
Trudy Moskov. Mat.Ob-va  {\bf 24} (1971), 201--236 (English translation: Trans. Moscow Math. Soc.  {\bf  24} (1974), 213--247).

\bibitem{Ne1} Niemytzki, V.,  {\it On the third axiom of metric spaces}, Trans Amer. Math.
Soc.  {\bf 29} (1927), 507--513.

\bibitem{Ne2}  Niemytzki,  V.,  {\it \" Uber die Axiome des metrischen Raumes}, Math. Ann.  {\bf 104}
 (1931), 666--671.
 
 \bibitem{RP}   I. A. Rus,   A. Petru\c sel  and  G. Petru\c sel, {\it Fixed Point Theory}, Cluj University Press,
Cluj-Napoca, 2008.
 
 \bibitem{SamVV} Samet, B., Vetro, C. and Vetro, F., \textit{Remarks on $G$-Metric Spaces}, Int. J. Anal. Volume 2013, Article ID 917158, 6 pages http://dx.doi.org/10.1155/2013/917158
 
   \bibitem{SA}  Shrivastava, R., Ansari, Z. K.; Sharma, M., On generalization of some fixed point results in dislocated quasi-metric spaces. Int. Math. Forum 6 (2011), no. 61-64, 3161--3168.
 
  \bibitem{W1}  Wilson, W. A. {\it On semi-metric spaces}. Amer. J. Math. 53 (1931), no. 2, 361--373.

\bibitem{W2}   Wilson, W.A. {\it On quasi-metric spaces}, Amer J. Math. 53 (1931), no. 6, 675--684.

\bibitem{ZHA} Zeyada, F.M., Hassan, G.H. and Ahmed,  M.A.  {\it A generalization of fixed point theorem due to Hitzler and Seda in dislocated quasi-metric space}, Arab. J. Sci. Eng. 31 (2005), 111-114.

\end{thebibliography}
\end{document}